\documentclass[12pt]{amsart}

\usepackage[a4paper,margin=1in]{geometry}
\usepackage{amsmath,amssymb,amsthm,mathtools}
\usepackage{enumitem,orcidlink,url}

\newtheorem{theorem}{Theorem}[section]
\newtheorem{proposition}[theorem]{Proposition}
\newtheorem{lemma}[theorem]{Lemma}

\newcommand{\R}{\mathbb{R}}

\newcommand{\Q}{\mathbb{Q}}
\newcommand{\Z}{\mathbb{Z}}
\newcommand{\Res}{\operatorname{Res}}

\title[Algebraic and analytic structure of Morikawa's problem]{Algebraic and analytic structure of Morikawa's sangaku problem}
\author[D. Krumm]{David Krumm\,\orcidlink{0000-0001-9999-2166}}
\address{Texas Tech University--Costa Rica, San José, Costa Rica}
\email{dkrumm@ttu.edu}
\urladdr{http://maths.dk}

\keywords{Sangaku, algebraic function, power series, Newton iteration}
\subjclass{14H05, 14P15}

\begin{document}

\begin{abstract}
Let $\mu(r)$ denote the minimal side length of a square inscribed in the curvilinear triangular region formed by two tangent circles of radii $1$ and $r \ge 1$ together with their common tangent line. The problem of finding a closed-form expression for $\mu(r)$ was posed in early nineteenth-century Japan by Morikawa. It was proved by Holly and Krumm~\cite{HollyKrumm2021} that no expression in radicals exists for $\mu(r)$. In this article we show that $\mu$ is an algebraic function, and consequently real-analytic on $[1,\infty)$ outside a finite explicitly computable set. In particular, although no expression in radicals exists, the function admits convergent Taylor expansions at all non-exceptional values of $r$, whose coefficients may be computed by Newton iteration from the defining algebraic equation. We illustrate the method by explicitly computing the Taylor expansion of $\mu(r)$ at $r=1$.
\end{abstract}

\maketitle

\section{Introduction}\label{sec:intro}

In Edo-period Japan (1603-1868), \emph{sangaku} were wooden tablets on which geometric problems were inscribed, and which were commonly dedicated at temples or shrines. These problems, often highly sophisticated, reflect a rich tradition of mathematics that developed during a period of limited international contact. A primary source of information on sangaku of the Edo period is a diary written in the early nineteenth century by the mathematician Yamaguchi Kanzan, who traveled extensively across Japan recording the contents of such tablets. Yamaguchi's diary is preserved in the city of Agano, Niigata Prefecture, as a designated cultural asset. Excerpts from the diary appear in translation in the book of Fukagawa and Rothman \cite{FukagawaRothman2016}, whose authors remark that: ``most [of the problems], we concede, are extremely difficult and one or two remain unsolved to the present day.'' \cite[p.~245]{FukagawaRothman2016}

Of the problems said to remain unsolved, one was posed by Sawa Masayoshi in 1821 and solved by Kinoshita \cite{Kinoshita2018} in 2018. The other, posed by Morikawa Jihei during the same time period, was shown by Holly and Krumm \cite{HollyKrumm2021} not to admit the kind of explicit solution traditionally sought. Morikawa's problem is easily stated by reference to Figure~\ref{fig:lens}. Two circles, $C_1$ and $C_r$, of radii $1$ and $r\ge1$, respectively, are tangent to each other and to a common horizontal line $L$. Among all squares contained in the resulting curvilinear triangular region and touching each of $C_1$, $C_r$, and $L$, let $\mu(r)$ denote the minimal possible side length. The problem asks for an explicit determination of $\mu(r)$ as a function of $r$.

\begin{figure}[ht]
  \centering
  \includegraphics[width=0.6\textwidth]{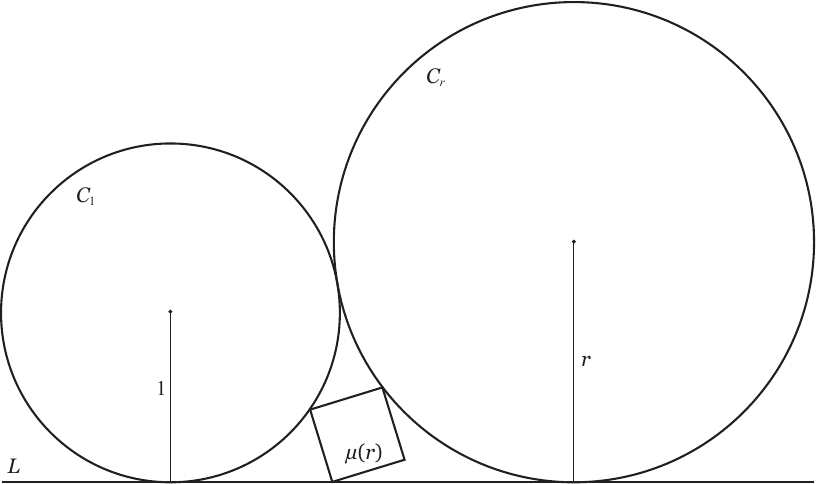}
  \caption{The region determined by two tangent circles of radii $1$ and $r\ge1$
  together with their common tangent line, and an inscribed square of minimal side
  length $\mu(r)$.}
  \label{fig:lens}
\end{figure}

Morikawa did not specify the precise form of expression sought for $\mu(r)$. However, the mathematical culture in which the problem arose strongly emphasized explicit algebraic expressions; it is therefore natural to interpret the question as asking for a formula obtainable by radicals. Under this interpretation, Morikawa’s problem was answered in \cite{HollyKrumm2021}, where it was proved that $\mu(r)$ is not expressible in radicals.

The absence of a solution by radicals does not, however, preclude other forms of explicit description. By the early modern period in Europe, techniques based on implicit functions, power series, and algebraic curves had been developed to describe solutions of equations that resist solution by radicals. In light of these developments, Morikawa’s function $\mu(r)$ may still admit an explicit analytic description even though it does not admit a closed-form algebraic representation.

The purpose of the present article is to make explicit the algebraic structure of the function $\mu(r)$, and to analyze the analytic consequences of that structure. In particular, we prove that $\mu$ is real-analytic outside a finite exceptional set.

\begin{theorem}\label{thm:analyticity}
There exists a finite set $E\subset[1,\infty)$ of real algebraic numbers such that the function
$\mu\colon [1,\infty)\setminus E \longrightarrow\R$ is real-analytic.
\end{theorem}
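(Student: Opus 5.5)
I would first set up coordinates and reduce the problem to a constrained minimization with polynomial data. Place $L$ as the $x$-axis, $C_1$ with center $(0,1)$, and $C_r$ with center $(2\sqrt r, r)$; the change of variable $r=s^2$ is harmless and keeps everything polynomial. An inscribed square touching $L$, $C_1$, $C_r$ is described by a side length $t$, an angle $\theta$ (encoded rationally by $u=\tan(\theta/2)$, or by $c=\cos\theta$, $d=\sin\theta$ with $c^2+d^2=1$), and a position. Each tangency condition (a vertex on $L$, and a vertex or side tangent to each circle) is a polynomial equation. The admissible squares therefore form finitely many real semialgebraic families, one for each combinatorial contact type, namely which vertex or side touches which curve. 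Following \cite{HollyKrumm2021}, I expect that the minimum is attained in the configuration where one vertex lies on $L$ and the two adjacent vertices lie on $C_1$ and $C_r$. After eliminating the position, this leaves a one-parameter family, so that $t=t(\theta,r)$ is defined implicitly by a polynomial relation $G(t,\theta,r)=0$.

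Next I would impose the Lagrange (critical point) condition $\partial_\theta t=0$, i.e. $G_\theta=0$, and eliminate $\theta$ by a resultant to obtain a nonzero polynomial $P(r,t)\in\Z[r,t]$, or in $\Z[s,t]$, with $P(r,\mu(r))=0$ for all $r\ge1$. For this step I need to know that $\mu(r)$ is attained at an interior critical point of one of the finitely many contact families and not at a degenerate boundary configuration. The boundary configurations are themselves cut out by polynomial conditions, so each one either contributes another algebraic branch or occurs only at finitely many $r$. This shows that $\mu$ is algebraic in the piecewise sense: its graph lies in the real zero set of a polynomial $P$, which I take squarefree.

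Then I would apply the implicit function theorem. Let $D(r)=\Res_t(P,\partial_tP)$ be the discriminant in $t$, times the leading coefficient. Outside the finitely many real roots of $D$, the real roots of $P(r,\cdot)$ are distinct and vary real-analytically, and on each interval between consecutive roots of $D$ they never cross. On such an interval $\mu$ is continuous, being a minimum over a compact family depending continuously on $r$. It is also confined to the finite set of analytic branches, so it must coincide with a single branch and is therefore analytic there. I would let $E$ consist of the roots of $D$ in $[1,\infty)$, together with the finitely many algebraic values of $r$ where the minimizing contact type could switch. These switching values are where two candidate branches from different families coincide, which again are roots of a resultant. All elements of $E$ are algebraic, and there are finitely many because $D\not\equiv0$.

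The main obstacle is the first step. I must justify that the minimum over all admissible squares is realized on a finite union of algebraic branches, and rule out contact types I have not enumerated. The continuity of $\mu$ is also needed for the single-branch argument, and proving it requires a compactness argument for the space of inscribed squares. I would try to handle both by a careful case analysis of contact types, supported by the computations in \cite{HollyKrumm2021}.
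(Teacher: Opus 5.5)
Your final step (squarefree $P$, discriminant $D$, continuity of $\mu$, single-branch argument) is sound and agrees with the paper. The paper applies the real-analytic implicit function theorem at each $k_0$ off the real roots of $\Res_y(P,P_y)$ and uses continuity of $\lambda$ to identify $\lambda$ with the implicit solution, which is the local form of your ``branches never cross'' argument. Your extra switching values are redundant, since distinct branches of a squarefree $P$ can only meet at roots of $D$.

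\textbf{The gap.} The gap is in the step you flag as the main obstacle, and the one concrete claim you make there is wrong. The minimizing square does not have its $L$-vertex adjacent to both circle-vertices. In your own coordinates, the function $z_r(x)$ of \cite[Prop.~4.2]{HollyKrumm2021} works as follows:
\begin{itemize}
\item it is the length of the side from $A=(\sqrt{2x-x^2},\,x)\in C_1$ to $D=A+(x,\,Y-x)\in C_r$, where $Y=r-\sqrt{r^2-(2\sqrt r-x-\sqrt{2x-x^2})^2}$;
\item the other neighbour $B=A+(Y-x,\,-x)$ of $A$ lies on $L$.
\end{itemize}
So it is the vertex on the small circle whose two neighbours touch $L$ and $C_r$, and the vertices on $L$ and $C_r$ are diagonally opposite. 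At $r=1$ your configuration gives the symmetric diamond of side $\sqrt2-1\approx 0.414$, while $\mu(1)\approx 0.3854$. A relation built from your family alone would therefore generally not vanish at $(r,\mu(r))$. Your fallback is only announced, not carried out: enumerating all contact types, ruling out boundary minima, and proving continuity by compactness.

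\textbf{What closes it.} The missing ingredient is what the paper takes from \cite[Prop.~4.2]{HollyKrumm2021}: $\mu(r)=\min_{x\in I}z_r(x)$ over the \emph{fixed} compact interval $I=[1-1/\sqrt2,1]$, with a unique minimizer in the interior. This settles both of your open points:
\begin{itemize}
\item continuity of $\mu$ follows from uniform continuity of $(r,x)\mapsto z_r(x)$ on $[1,R]\times I$;
\item interiority makes the minimizer a critical point.
\end{itemize}
The critical-point equation and the value relation are then already available as the polynomials $p(k,x)$ and $h(k,x,y)$ of \cite{HollyKrumm2021}. These are satisfied by $(\xi(k),\lambda(k))$ with $\lambda(k)=\mu(k^2)^2$, and they play the roles of your $G_\theta$ and $G$. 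The paper eliminates $x$ by a single resultant and \emph{checks by computation} that $\Res_x(p,h)$ is not identically zero. Your plan needs the same check. $\Res_\theta(G,G_\theta)$ vanishes identically whenever $G$ and $G_\theta$ share a factor of positive degree in $\theta$ (for instance, if $G$ has a repeated factor), so nonvanishing is not automatic.
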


Thus, $\mu(r)$ can be locally described by convergent power series at all but finitely many values of $r$. Our approach is based on identifying an explicit algebraic equation satisfied by $\mu$, and then analyzing its discriminant in order to determine where the real-analytic implicit function theorem may be applied. The obstruction to radicals discovered in \cite{HollyKrumm2021} is therefore only one manifestation of a richer structure: the function $\mu$ is globally constrained by a polynomial equation in the variables $(r,\mu)$, and locally governed by analytic expansions wherever the discriminant does not vanish. In this sense, Morikawa’s problem admits a modern analytic resolution: although no radical formula exists, the function enjoys the full analytic regularity permitted by its algebraic nature.

The power series expansions of $\mu$ are not merely guaranteed to exist abstractly: once the defining algebraic equation is known and a non-exceptional radius is chosen, the coefficients of the power series can be computed by applying Newton iteration in a power series ring. As an illustration of the method, expanding about $r=1$ yields
\[
\mu(1+t)\approx 
\frac{227}{589}
+ \frac{157}{883}t
- \frac{55}{783}t^{2}
+ \frac{33}{898}t^{3}
- \frac{10}{449}t^{4}
+ \frac{7}{473}t^{5}
+ O(t^{6})
\]
for sufficiently small $t>0$. This expansion provides a concrete and computable analogue of the kind of explicit formula that might naturally have been inscribed in a sangaku.

The paper is organized as follows. In Section~\ref{sec:algebraic}, we use polynomial resultants to construct an explicit polynomial relation satisfied by the auxiliary function $\lambda(k)=(\mu(k^2))^2$, thus proving that $\mu$ is algebraic. In Section~\ref{sec:analyticity}, we analyze the discriminant of this polynomial to identify a finite exceptional set and apply the real-analytic implicit function theorem to establish Theorem~\ref{thm:analyticity}. Finally, in Section~\ref{sec:taylor}, we make the analytic structure concrete by computing a Taylor expansion of $\mu$ at $r=1$ using Newton iteration in the power series ring $\R[[t]]$, and we compare the resulting approximation with independent numerical computations using the methods of \cite{HollyKrumm2021}.

All computations described in the paper were carried out using Sage~\cite{SageMath}. Complete source code is available in~\cite{KrummMorikawaCode}.

\section{An algebraic equation for the Morikawa function}\label{sec:algebraic}

Recall that a real-valued function $f$ defined on an interval $I\subset\R$ is called \emph{algebraic} if there exists a nonzero polynomial $P(x,y)\in\R[x,y]$ such that $P(c,f(c))=0$ for all $c\in I$. We show here that the function $\mu$ is algebraic.

To simplify the analysis, we introduce the auxiliary function
\[\lambda(k) := (\mu(k^2))^2, \qquad k\ge 1.\]
The change of variables $r=k^2$ is a real-analytic bijection of $[1,\infty)$ onto itself, with real-analytic inverse $k=\sqrt{r}$. Consequently, $\mu$ is real-analytic at $r_0$ if and only if $\lambda$ is real-analytic at $k_0=\sqrt{r_0}$. Moreover, it is clear from the definition of $\lambda$ that $\mu$ is algebraic if and only if $\lambda$ is. We therefore carry out the analysis using $\lambda$ and translate our main conclusions back to $\mu$.

\begin{proposition}\label{prop:lambda_algebraic}
The function $\lambda$ (and therefore $\mu$) is algebraic on $[1,\infty)$.
\end{proposition}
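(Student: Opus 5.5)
We will set up coordinates, write the minimal square as the solution of a polynomial optimization problem, and eliminate all auxiliary variables by resultants. This leaves a single nonzero polynomial $F(k,\ell)\in\Q[k,\ell]$ with $F(k,\lambda(k))=0$ for all $k\ge1$.

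For the coordinates, let $L$ be the $x$-axis and put $C_1$ with center $(0,1)$. Tangency of $C_1$ and $C_r$ with $r=k^2$ forces the horizontal distance between the centers to be $2\sqrt{r}=2k$, so $C_r$ has center $(2k,k^2)$. This is the reason for the substitution $r=k^2$: it removes the square root. A square in the region touching $L$, $C_1$ and $C_r$ can be parametrized by its side $s$, a direction vector $(c,d)$ with $c^2+d^2=1$, and one vertex, which we may take on $L$. The remaining vertices are then polynomial in these data. Contact with $C_1$ occurs either at a vertex lying on $C_1$ or along a side tangent to $C_1$ (a distance condition from the center, which is polynomial after squaring); the same holds for $C_r$. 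For each of these finitely many contact configurations we obtain a system of polynomial equations in $(k,s,c,d,x_0)$ with one degree of freedom.

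Minimality adds one more polynomial condition: a Lagrange/Jacobian condition that the derivative of $s$ along the one-parameter family vanishes, or else the condition that the minimum occurs at a boundary configuration where an extra contact holds. In either case the minimizer satisfies an additional polynomial equation. This yields a zero-dimensional system in $(s,c,d,x_0)$ over $\Q(k)$. We then take successive resultants to eliminate $x_0$, $d$ and $c$, obtaining a polynomial $G_j(k,s)$ for each configuration $j$. Since only $s^2$ is needed and odd powers can be removed by taking the norm $G_j(k,s)G_j(k,-s)$, this gives a polynomial $F_j(k,\ell)$ in $\ell=s^2$. Set $F=\prod_j F_j$. For each $k$, $\lambda(k)$ is a root of $F(k,\cdot)$, because $\mu(k^2)$ is attained by some configuration. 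If the paper's earlier analysis (or \cite{HollyKrumm2021}) already identifies which configuration is optimal, we use only that one.

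The main obstacle is showing that $F$ is not identically zero, since a resultant can vanish identically when the eliminated polynomials share a common factor. We would handle this by checking each $F_j$ at one specific value, say $k=1$, and verifying by computer algebra that $F_j(1,\ell)\not\equiv0$. Alternatively, we would argue that the system has finitely many solutions for generic $k$, so that the elimination ideal in $\Q[k,s]$ is nonzero, and take $F$ to be a nonzero element of that ideal; an elimination Gröbner basis produces one directly. A secondary issue is justifying that the minimum exists and is attained at a critical or boundary configuration. Compactness of the set of admissible squares handles this, so $\lambda(k)$ always lies among the roots of $F(k,\cdot)$.
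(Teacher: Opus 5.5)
\textbf{There is a gap in the all-configurations version.} It is the step where you assert that adding the minimality condition yields a zero-dimensional system over $\Q(k)$. That is false for several of your configurations. Your equations only record that a vertex lies on a circle, or that the \emph{line} of a side is tangent to it. So some configurations have positive-dimensional solution sets on which $s$ is unconstrained.

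\emph{First example.} Suppose both contacts are taken to be ``the line $V_0V_1$ is tangent''. The system is $c^2+d^2=1$, $(x_0d+c)^2=1$, $((2k-x_0)d-k^2c)^2=k^4$. It does not involve $s$, and neither does any Jacobian condition built from it. It holds on the whole surface $d=0$, $c=\pm1$, with $x_0$ and $s$ arbitrary: a side along $L$ is ``tangent'' to both circles at the cusps.

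\emph{Second example.} The configuration $V_0\in C_1$ gives the non-reduced equation $x_0^2=0$, whose gradient vanishes on its zero set. Your Jacobian condition therefore holds identically along a curve on which $s$ varies.

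For such $j$ the ideal itself has no nonzero element in $\Q[k,s]$. This is not an artifact of successive resultants, so no elimination method can help. Every $F_j$ you compute is $0$, your check $F_j(1,\ell)\not\equiv0$ fails, and the product $F$ is $0$. The generic-finiteness alternative is unavailable for the same reason. To rescue this version you need an extra geometric step. Index configurations by the \emph{actual} contacts of the square. Observe that configurations like the two above force contact at a cusp, which no square in the region can have, and discard them. Only then verify non-vanishing for the remaining configurations.

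\textbf{The fix is your fallback, which is in effect the paper's proof.} Holly and Krumm have already reduced the problem to one explicit family. Namely, $\mu(r)$ is the minimum of $z_r$ on $(1-1/\sqrt2,1)$, attained at a unique minimizer in that open interval. From this they extract two polynomials: $p(k,x)$, satisfied by an auxiliary function $\xi(k)$, and $h(k,x,y)$, with $h(k,\xi(k),\lambda(k))=0$. The paper therefore needs only the single resultant $\Res_x(p,h)$ and one computation showing it is nonzero, with no configuration bookkeeping. The resulting explicit $P(k,y)$ is also what the later discriminant and Taylor computations use.

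If one only wants algebraicity, there is a computation-free route. The graph of $k\mapsto\mu(k^2)$ is defined by a first-order formula over $\R$, hence is semialgebraic by Tarski--Seidenberg. A semialgebraic function of one variable satisfies a nonzero polynomial relation. This avoids your non-vanishing obstacle entirely, but gives no explicit equation.
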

\begin{proof}
We build on the algebraic relations derived by Holly and Krumm~\cite{HollyKrumm2021}. Let $p(k,x)\in\Z[k,x]$ be the polynomial defined by \cite[Eq.~(2), p.~19]{HollyKrumm2021}. In the paragraph following the proof of \cite[Lemma~5.2]{HollyKrumm2021}, the authors introduce a function $\xi(k)$ and show that
\[
p(c,\xi(c))=0
\qquad\text{for every }c\in[1,\infty);
\]
see \cite[Eq.~(4)]{HollyKrumm2021}. They then construct a polynomial $h\in\Z[k,x,y]$ such that
\[
h(c,\xi(c),\lambda(c))=0
\qquad\text{for every }c\in[1,\infty);
\]
see \cite[Eq.~(5)]{HollyKrumm2021}. It follows that for every $c\ge 1$, the number $\xi(c)$ is a common root of the polynomials $p(c,x)$ and $h(c,x,\lambda(c))$, hence their resultant vanishes. Defining
\[
R(k,y)=\Res_x\bigl(p(k,x),\,h(k,x,y)\bigr)\in\Z[k,y],
\]
we therefore have $R(c,\lambda(c))=0$ for all $c\ge 1$. Computing $R(k,y)$ shows that it is not the zero polynomial, and thus $\lambda$ is an algebraic function on $[1,\infty)$, as claimed.

For later use, we derive an equation of smaller degree satisfied by $\lambda$. Factoring $R(k,y)$, we obtain \[R(k,y)=2^{26}(k^2+1)^2P(k,y),\] where $P\in\Z[k,y]$ is a primitive polynomial of total degree 144. Since $c^2+1\neq 0$ for every real $c$, it follows that $P(c,\lambda(c))=0$ for $c\ge 1$, again showing that $\lambda$ is algebraic.
\end{proof}

\section{Analyticity outside a finite set}\label{sec:analyticity}

Having established that $\lambda$ satisfies a polynomial relation $P(c,\lambda(c))=0$, we now investigate its local analytic behavior. The first step in this analysis is to prove that $\lambda$ is continuous.

Recall from \cite[Prop.~4.2]{HollyKrumm2021} that for each $r\ge1$, $\mu(r)$ is the minimum value of the function
\[z_r(x):=\sqrt{x^{2}+\Bigl(r-x-\sqrt{\,r^{2}-\bigl(2\sqrt r-x-\sqrt{2x-x^{2}}\bigr)^{2}}\,\Bigr)^{2}}\]
on the interval $\bigl(1-1/\sqrt2,\,1\bigr)$. Moreover, $z_r$ has a unique minimizer $x_m=x_m(r)$ in this interval, and is strictly decreasing for $x<x_m$ and strictly increasing for $x>x_m$. A straightforward verification using elementary inequalities shows that every term under a square root in the expression for $z_r(x)$ remains nonnegative on $[1-1/\sqrt2,1]$, so $z_r$ extends continuously to this closed interval.

The above formulation serves both to establish continuity of $\mu$ and to facilitate numerical computation, since $\mu(r)$ can be approximated by minimizing $z_r(x)$ on $[1-1/\sqrt2,1]$ using standard one-variable optimization routines.

\begin{lemma}\label{lem:mu_continuous}
The function $\mu$ (and therefore $\lambda$) is continuous on $[1,\infty)$.
\end{lemma}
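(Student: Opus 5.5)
We prove continuity of $\mu$ by viewing it as the minimum of a jointly continuous function over a fixed compact interval. Set $I=[1-1/\sqrt2,\,1]$ and define $F(r,x):=z_r(x)$ on $[1,\infty)\times I$.

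\textbf{Step 1: $F$ is jointly continuous.} By the remark preceding the lemma, every radicand in the formula for $z_r(x)$ is nonnegative on $I$ for each $r\ge1$. So $F$ is a composition of polynomials in $r$, $\sqrt r$ and $x$ with square roots applied to nonnegative continuous quantities. Since $t\mapsto\sqrt t$ is continuous on $[0,\infty)$, this shows $F$ is continuous on $[1,\infty)\times I$. Before relying on this, I will check that the elementary inequalities behind the nonnegativity claim hold for all $r\ge1$ simultaneously, and not merely for each fixed $r$. In particular, the key bound is
\[
r^2-\bigl(2\sqrt r-x-\sqrt{2x-x^2}\bigr)^2\ge0,
\]
and this needs $0\le 2\sqrt r-x-\sqrt{2x-x^2}\le r$ on $I$. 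That follows because $x+\sqrt{2x-x^2}\le 2$ on $I$ and $2\sqrt r-r\le1$.

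\textbf{Step 2: the minimum over $I$ equals $\mu(r)$.} By \cite[Prop.~4.2]{HollyKrumm2021}, $\mu(r)$ is the minimum of $z_r$ on the open interval, attained at the interior point $x_m(r)$. The function $z_r$ is strictly decreasing to the left of $x_m$, strictly increasing to the right, and continuous on $I$. Hence its values at the two endpoints are at least $z_r(x_m)$. Therefore $\mu(r)=\min_{x\in I}F(r,x)$.

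\textbf{Step 3: continuity of the minimum.} This is the standard argument that the minimum over a compact set of a jointly continuous function is continuous, and I expect it to be the main (if mild) point. Fix $r_0\ge1$ and a compact neighborhood $J=[\max(1,r_0-1),\,r_0+1]$. Then $F$ is uniformly continuous on the compact set $J\times I$. So for any $\varepsilon>0$ there is $\delta>0$ with $|F(r,x)-F(r_0,x)|<\varepsilon$ for all $x\in I$ whenever $r\in J$ and $|r-r_0|<\delta$. Taking minima over $x$ gives
\[
\bigl|\min_I F(r,\cdot)-\min_I F(r_0,\cdot)\bigr|\le\varepsilon,
\]
that is, $|\mu(r)-\mu(r_0)|\le\varepsilon$. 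Thus $\mu$ is continuous on $[1,\infty)$.

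Finally, $\lambda(k)=\mu(k^2)^2$ is a composition of continuous functions, so it is continuous as well.
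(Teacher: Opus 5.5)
Your argument is correct and is essentially the paper's proof: joint (hence uniform) continuity of $(r,x)\mapsto z_r(x)$ on a compact rectangle over the closed interval $I=[1-1/\sqrt2,1]$, followed by the estimate $|\min_I z_r-\min_I z_s|\le\max_I|z_r-z_s|$. Your Steps 1 and 2 usefully spell out two points the paper leaves implicit: that the radicands are nonnegative, and that the minimum over the closed interval $I$ still equals $\mu(r)$.

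One small slip in Step 1: the upper bound $2\sqrt r-x-\sqrt{2x-x^2}\le r$ also needs $x+\sqrt{2x-x^2}\ge 1$ on $I$. This holds, since that quantity increases from $1$ to $2$ on $I$. The bound $x+\sqrt{2x-x^2}\le 2$ that you cite only gives the lower bound.
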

\begin{proof}
Let $I=[1-1/\sqrt2,1]$ and fix $R>1$. The map $(r,x)\mapsto z_r(x)$ is continuous on the compact set
$[1,R]\times I$, hence uniformly continuous there. Thus, given $\varepsilon>0$ there exists
$\delta>0$ such that if $|r-s|<\delta$ (with $r,s\in[1,R]$), then
\[|z_r(x)-z_s(x)|<\varepsilon \quad\text{for all } x\in I.\]
Taking minima over $x\in I$ gives
\[|\mu(r)-\mu(s)|
=\Bigl|\min_{x\in I} z_r(x)-\min_{x\in I} z_s(x)\Bigr|
\le \max_{x\in I}|z_r(x)-z_s(x)|
<\varepsilon,
\]
so $\mu$ is continuous on $[1,R]$. Since $R$ was arbitrary, $\mu$ is continuous on $[1,\infty)$.
\end{proof}

With continuity in hand, we now recall the real-analytic implicit function theorem (see, for example, \cite[Thm.~6.1.2]{KrantzParks2013}), which will be our main tool for proving analyticity outside a finite exceptional set.

\begin{theorem}[Real-analytic implicit function theorem]\label{thm:IFT}
Let $F(x,y)$ be real-analytic in a neighborhood of $(x_0,y_0)\in\R^2$.
Suppose that
\[
F(x_0,y_0)=0
\quad\text{and}\quad
\frac{\partial F}{\partial y}(x_0,y_0)\neq 0.
\]
Then there exist open intervals $U$ containing $x_0$ and $V$ containing $y_0$
and a real-analytic function $f:U\to V$ such that for all $(x,y)\in U\times V$, 
\[F(x,y)=0\quad \Longleftrightarrow \quad y=f(x).\]
\end{theorem}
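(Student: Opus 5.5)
The statement just above is the real-analytic implicit function theorem, quoted from \cite[Thm.~6.1.2]{KrantzParks2013}. It is a standard result, and the paper invokes it rather than proving it. Still, here is how I would prove it directly.

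The plan is to complexify and reduce to the holomorphic implicit function theorem. Since $F$ is real-analytic near $(x_0,y_0)$, its convergent power series
\[
F(x,y)=\sum_{i,j\ge0}a_{ij}(x-x_0)^i(y-y_0)^j,\qquad a_{ij}\in\R,
\]
converges on a small real box and hence absolutely on a complex bidisc $D$ around $(x_0,y_0)$. This defines a holomorphic extension $F_{\mathbb C}$ on $D$, and $\partial F_{\mathbb C}/\partial y(x_0,y_0)=a_{01}\neq0$.

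First I would prove the holomorphic version. Pick a radius $\rho>0$ so that $y\mapsto F_{\mathbb C}(x_0,y)$ has $y_0$ as its only zero in $|y-y_0|\le\rho$, and that zero is simple. This is possible because zeros of a nonzero holomorphic function are isolated. By continuity and compactness of the circle, there is $\sigma>0$ such that $F_{\mathbb C}(x,y)\neq0$ for $|x-x_0|<\sigma$ and $|y-y_0|=\rho$.

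For such $x$, the argument principle
\[
N(x)=\frac{1}{2\pi i}\oint_{|y-y_0|=\rho}\frac{\partial_yF_{\mathbb C}(x,y)}{F_{\mathbb C}(x,y)}\,dy
\]
gives an integer-valued continuous function of $x$, hence constant and equal to $N(x_0)=1$. So there is exactly one zero $f(x)$ inside the circle, and the residue formula gives
\[
f(x)=\frac{1}{2\pi i}\oint_{|y-y_0|=\rho} y\,\frac{\partial_yF_{\mathbb C}(x,y)}{F_{\mathbb C}(x,y)}\,dy .
\]
The integrand is holomorphic in $x$ and continuous in $(x,y)$, so differentiating under the integral shows $f$ is holomorphic on $|x-x_0|<\sigma$.

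Next I would restrict to the reals. For real $x$, the conjugate $\overline{f(x)}$ is also a zero of $F_{\mathbb C}(x,\cdot)$ in the disc, because the coefficients $a_{ij}$ are real. Uniqueness then forces $f(x)\in\R$. Hence $f$ restricted to $U=(x_0-\sigma,x_0+\sigma)$ is real-valued, and its Taylor series at any point converges, so $f$ is real-analytic.

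Finally I would set $V=(y_0-\rho,y_0+\rho)$, shrinking $U$ if needed by continuity so that $f(U)\subset V$. The equivalence $F(x,y)=0\iff y=f(x)$ on $U\times V$ is exactly the uniqueness of the zero in the disc.

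I expect the main obstacle to be the uniform nonvanishing on the circle $|y-y_0|=\rho$. That is where the hypothesis $\partial_yF(x_0,y_0)\neq0$ and the compactness argument must be handled carefully. Everything after that is routine bookkeeping with the residue formula.
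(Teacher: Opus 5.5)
The paper gives no proof of Theorem~\ref{thm:IFT}; it only cites \cite[Thm.~6.1.2]{KrantzParks2013}. Your argument is therefore a self-contained substitute, and it is correct.

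Your route has four steps:
\begin{enumerate}[label=(\roman*)]
\item complexify;
\item count zeros with the argument principle;
\item recover $f$ from the residue integral $\frac{1}{2\pi i}\oint y\,\partial_yF_{\mathbb C}/F_{\mathbb C}\,dy$;
\item get reality from the symmetry $\overline{F_{\mathbb C}(x,\bar y)}=F_{\mathbb C}(x,y)$ for real $x$.
\end{enumerate}
This produces existence, analyticity, and the uniqueness half of the equivalence on an explicit box $U\times V$ all at once, along with an explicit radius $\sigma$ for the expansion of $f$. The uniqueness half is what the paper actually relies on in Propositions~\ref{prop:lambda_analytic} and~\ref{prop:lambda_at_1} to identify $\lambda$ with $f$.

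The other classical route is Cauchy's method of majorants. It builds the Taylor coefficients of $f$ recursively in $\R[[x-x_0]]$ and proves convergence by comparison with an explicitly solvable dominating equation. That route stays entirely real and formal, which fits the Hensel/Newton viewpoint of Section~\ref{sec:taylor}. Its cost is that uniqueness among arbitrary solutions in $U\times V$ needs a separate, though easy, argument, such as strict monotonicity of $F(x,\cdot)$ on $V$ once $\partial_yF\neq0$ on $U\times V$.

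Three small corrections to your write-up:
\begin{enumerate}[label=(\arabic*)]
\item The step you flag as the main obstacle is not where the hypothesis $\partial_yF(x_0,y_0)\neq0$ is used. Nonvanishing of $F_{\mathbb C}$ on $|y-y_0|=\rho$ for $x$ near $x_0$ needs only that $y_0$ is an isolated zero of the nonzero function $F_{\mathbb C}(x_0,\cdot)$, plus compactness. The derivative hypothesis enters only to make that zero simple, so that $N(x_0)=1$.
\item No shrinking of $U$ is needed at the end. $f(x)$ is real and lies in the open disc $|y-y_0|<\rho$, hence already in $V$.
\item Choose the bidisc so that its real trace lies inside the region where the power series actually sums to $F$. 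Otherwise $F_{\mathbb C}$ need not restrict to $F$ on $U\times V$, and real zeros of $F$ there are not automatically zeros of $F_{\mathbb C}$.
\end{enumerate}
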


We may now prove Theorem~\ref{thm:analyticity}.

\begin{proposition}\label{prop:lambda_analytic}
There exists a finite set $E_0 \subset [1,\infty)$ of real algebraic numbers such that $\lambda$ is real-analytic on $[1,\infty)\setminus E_0$. Consequently, $\mu$ is real-analytic outside a finite subset of $[1,\infty)$.
\end{proposition}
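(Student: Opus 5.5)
We plan to take the discriminant of $P$ with respect to $y$, show that it is a nonzero polynomial in $k$, and then combine the implicit function theorem (Theorem~\ref{thm:IFT}) with the continuity of $\lambda$ (Lemma~\ref{lem:mu_continuous}).

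First we factor $P(k,y)$ into irreducible factors in $\Q[k,y]$, say $P=\prod_i P_i^{e_i}$. Let $Q=\prod_i P_i$ be the squarefree part; then $Q(c,\lambda(c))=0$ for all $c\ge1$. Next we define
\[
D(k)=\operatorname{lc}_y(Q)(k)\cdot \Res_y\Bigl(Q,\tfrac{\partial Q}{\partial y}\Bigr)(k)\in\Z[k].
\]
Because $Q$ is squarefree as a polynomial in $y$ over $\Q(k)$, and no irreducible factor $P_i$ lies in $\Z[k]$ alone, this discriminant-type resultant is a nonzero polynomial. (If some $P_i$ depends only on $k$, it vanishes at only finitely many $c$, so we simply discard it.) Now let $E_0$ be the set of real roots of $D$ in $[1,\infty)$. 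This is a finite set of real algebraic numbers. In practice $E_0$ is computed explicitly from $D$; the abstract nonvanishing argument above already gives finiteness.

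Now fix $c_0\in[1,\infty)\setminus E_0$. The polynomial $Q(c_0,y)$ has nonzero leading coefficient and no repeated roots, so $y_0=\lambda(c_0)$ is a simple root, and hence $\partial Q/\partial y(c_0,y_0)\neq0$. Since polynomials are real-analytic, Theorem~\ref{thm:IFT} gives open intervals $U\ni c_0$ and $V\ni y_0$ and an analytic $f\colon U\to V$ whose graph is exactly the zero set of $Q$ in $U\times V$. By Lemma~\ref{lem:mu_continuous}, $\lambda$ is continuous, so after shrinking $U$ we have $\lambda(U\cap[1,\infty))\subset V$. Because $Q(c,\lambda(c))=0$, the uniqueness part of the implicit function theorem forces $\lambda=f$ on $U\cap[1,\infty)$. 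Hence $\lambda$ is real-analytic near $c_0$; at $c_0=1$ this means one-sided analyticity, witnessed by the analytic extension $f$.

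For $\mu$, we use the fact that $r\mapsto\sqrt r$ is analytic on $[1,\infty)$. This gives analyticity of $\mu(r)=\sqrt{\lambda(\sqrt r)}$ on the set $\{r: \sqrt r\notin E_0\}$. The square root of $\lambda$ causes no trouble because $\lambda=\mu^2>0$, as a square has positive side length. The resulting exceptional set is $E=\{c^2:c\in E_0\}$, which is finite and consists of algebraic numbers.

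The main obstacle is verifying that the discriminant does not vanish identically, or equivalently that the factor actually vanishing along the graph of $\lambda$ is squarefree. Passing to the squarefree part $Q$ handles this in principle. What remains is to confirm computationally, in Sage, that $\Res_y(Q,\partial_yQ)$ is nonzero and to find its real roots. Since $P$ has total degree $144$, this resultant computation is heavy; if it proves too costly, we would first identify the single irreducible factor of $P$ vanishing on $(c,\lambda(c))$ by numerically evaluating each factor at a few sample points, and then work only with that factor.
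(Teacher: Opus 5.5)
Your proposal is correct and follows essentially the same route as the paper. Both take the resultant of the defining polynomial with its $y$-derivative, let $E_0$ be its real roots in $[1,\infty)$, and at every other point combine the real-analytic implicit function theorem with the continuity of $\lambda$ to identify $\lambda$ locally with the analytic implicit solution.

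The differences are only precautions. The paper works directly with $P$, having checked by computation that $\Res_y(P,P_y)\neq 0$. It needs no leading-coefficient factor, because the resultant lies in the ideal $(P,P_y)$, so a common zero at $(k_0,y_0)$ already forces $\Delta(k_0)=0$. Your observation that $\lambda=\mu^2>0$ is what makes the passage back to $\mu$ analytic is a detail the paper leaves implicit.
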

\begin{proof}
With $P(k,y)$ as in the proof of Proposition~\ref{prop:lambda_algebraic}, define
\[\Delta(k)=\Res_y\left(P,P_y\right)\in\Z[k],\]
where $P_y=\frac{\partial P}{\partial y}$. Let $\Delta_0$ be the primitive part of the squarefree part of $\Delta$. Factoring $\Delta_0$ yields
\[\Delta_0(k)=k(k-1)(2k+1)(k^2+1)J(k),\]
where $J$ is a product of 19 irreducible polynomials in $\Z[k]$ with degrees in $[2,738]$.

Let $E_0$ be the set of real roots of $\Delta_0$ in $[1,\infty)$, and note that $E_0$ coincides with the set of real roots of $\Delta$ in $[1,\infty)$. Clearly, $E_0$ is a finite set of algebraic numbers including 1; in our computation, $E_0$ has 138 additional elements, all in the interval $(1.03, 163.04)$.

We now describe the analytic behavior of $\lambda$ away from the exceptional set $E_0$. Since $E_0$ is finite and $1\in E_0$, the set $[1,\infty)\setminus E_0$ is a disjoint union of open intervals (in fact, 139 intervals). Fix $k_0$ in any of these intervals, and set $y_0=\lambda(k_0)$. Since $k_0\notin E_0$, we have $P_y(k_0,y_0)\neq0$. By Theorem~\ref{thm:IFT}, there exist  neighborhoods $U$ of $k_0$ and $V$ of $y_0$, and a real-analytic function $f:U\to V$ such that for all $(x,y)\in U\times V$,
\[P(x,y)=0\quad \Longleftrightarrow \quad y=f(x).\]
Since $\lambda$ is continuous by Lemma~\ref{lem:mu_continuous}, after possibly shrinking $U$ we may assume
that $\lambda(c)\in V$ for all $c\in U\cap[1,\infty)$. For such $c$ we have
$P(c,\lambda(c))=0$, and hence, by the equivalence displayed above, $\lambda(c)=f(c)$. In particular, $\lambda$ is real-analytic in a neighborhood of $k_0$. This proves the proposition, as $k_0$ was an arbitrary element of $[1,\infty)\setminus E_0$.
\end{proof}

\section{Computation of a Taylor expansion}\label{sec:taylor}

We conclude by illustrating how the analytic structure of $\lambda$ and $\mu$ can be made explicit in practice. In Proposition~\ref{prop:lambda_analytic}, we showed that $\lambda$ is real-analytic outside the finite set $E_0$ of roots of the discriminant $\Delta(k)=\Res_y(P,P_y)$ in $[1,\infty)$. The vanishing of $\Delta$ at a point $k_0$ indicates that the real-analytic implicit function theorem cannot be applied directly at $(k_0,\lambda(k_0))$. However, membership in $E_0$ does not by itself imply that $\lambda$ fails to be real-analytic at $k_0$. To illustrate this phenomenon, we examine the point $1\in E_0$, which corresponds to the case where the two circles in Morikawa's problem have equal radius. We show that $\lambda$ admits a convergent power series expansion at $k=1$ and we compute its first several coefficients; from this expansion we then derive the expansion of $\mu$ at $r=1$.

\subsection*{Analyticity at $k=1$}

Building on the fact that $\lambda$ is continuous at $k=1$, we now prove it is analytic at that point. Throughout the remainder of the paper, we set
\[a:=\lambda(1)\approx 0.14853189819,\] 
the latter approximation being obtained from the minimization procedure described in Section~\ref{sec:analyticity}. Recalling that $P(c,\lambda(c))=0$ for all $c\ge 1$, we have $P(1,a)=0$. 

\begin{proposition}\label{prop:lambda_at_1}
The function $\lambda$ is real-analytic at $k=1$.
\end{proposition}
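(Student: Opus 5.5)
The plan is to show that, although $\Delta(1)=0$, the discriminant vanishes at $k=1$ for a reason unrelated to the particular branch $y=a$. The point is that $1\in E_0$ only tells us that $P(1,y)$ has \emph{some} repeated root. That root need not be $a$.

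So the first step is to isolate the irreducible factor of $P$ that $\lambda$ satisfies. Factor $P(k,y)$ over $\Z$ into irreducible factors $P=\prod_i Q_i^{e_i}$. Each $Q_i(c,\lambda(c))$ is a polynomial in $c$ that is evaluated along an algebraic curve, so by an identity-theorem argument on the continuous algebraic function $\lambda$ we can select one factor $Q$. Concretely, $\prod_i Q_i(c,\lambda(c))^{e_i}=0$ for all $c\ge1$, so some $Q_i(\cdot,\lambda(\cdot))$ vanishes on a set with an accumulation point. Since $\lambda$ is real-analytic on the intervals of $[1,\infty)\setminus E_0$ by Proposition~\ref{prop:lambda_analytic}, it then vanishes on a whole interval $(1,1+\epsilon)$ adjacent to $1$. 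By continuity of $\lambda$ (Lemma~\ref{lem:mu_continuous}), this gives $Q(1,a)=0$.

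Next I check, by exact computation, that $Q_y(1,a)\neq0$. I would compute $Q(1,y)\in\Z[y]$, take its squarefree factorization, and isolate the real roots with rational intervals. The certified numerical value $a\approx0.14853189819$, obtained from minimizing $z_1$, then determines which root $a$ is. The goal is to verify that this root is simple: it should lie in an isolating interval for a squarefree factor of $Q(1,y)$ that does not divide $\gcd(Q(1,y),Q_y(1,y))$. Equivalently, the repeated roots of $Q(1,y)$ should lie at other values. If instead $Q$ fails here, I would fall back to checking $P$ directly, replacing $P$ by the minimal factor. In either case the key object is a polynomial $F(k,y)$ with $F(c,\lambda(c))=0$ on an interval containing points arbitrarily close to $1$, $F(1,a)=0$, and $F_y(1,a)\neq0$.

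Given this, apply Theorem~\ref{thm:IFT} to $F$ at $(1,a)$. This gives intervals $U\ni1$ and $V\ni a$ and an analytic $f$ on $U$ whose graph is exactly the zero set of $F$ in $U\times V$. By continuity, $\lambda(c)\in V$ for $c\in U\cap[1,\infty)$ near $1$, and for such $c$ we have $F(c,\lambda(c))=0$, hence $\lambda(c)=f(c)$. Thus $\lambda$ coincides with an analytic function defined on a full neighborhood of $1$, which is what real-analyticity at the endpoint means here.

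The main obstacle is the rigorous identification of $a$ as a simple root. It requires an error bound on the numerical minimization of $z_1$ that is tight enough to separate $a$ from every repeated root of $F(1,y)$. This should be possible by bracketing $\min z_1$ using the monotonicity of $z_1$ on either side of $x_m$. If that bracketing turns out to be too crude, I would use a different argument to show that $\lambda$ avoids the singular roots: for instance, the geometric symmetry at $r=1$ may give an exact algebraic characterization of $\mu(1)$.
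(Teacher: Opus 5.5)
Your proposal is correct, and its skeleton matches the paper's. You isolate an irreducible factor of $P$ that $\lambda$ satisfies near $k=1$, check that $a$ is a simple root of its specialization at $k=1$, and conclude with Theorem~\ref{thm:IFT} and continuity of $\lambda$.

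The genuine difference is how the factor is selected. The paper writes $P=qs$, with $q$ the irreducible factor of total degree $28$, and uses $s(1,a)\neq0$; that check rests on the numerical value of $a$. Shrinking $U\times V$ so that $s$ does not vanish there, it gets $q(c,\lambda(c))=0$ directly from $P(c,\lambda(c))=0$, using only Lemma~\ref{lem:mu_continuous}.

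You instead use Proposition~\ref{prop:lambda_analytic}. On the first component $(1,e_1)$ of $[1,\infty)\setminus E_0$, where $e_1=\min(E_0\setminus\{1\})$, the functions $c\mapsto Q_i(c,\lambda(c))$ are analytic and their product vanishes, so one of them vanishes identically there. State it this way, or run your pigeonhole argument on a compact subinterval of $(1,e_1)$, so that the accumulation point lies where $\lambda$ is known to be analytic.

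Your route buys robustness. You never need $a$ to be a root of only one factor of $P(1,y)$; you only need $a$ to be a simple root of $Q(1,y)$ for the factor that $\lambda$ actually follows. The price is that the selection is existential. In practice you must test every $Q_i$ with $Q_i(1,a)=0$, which needs a certified enclosure of $a$, just as the paper's check of $s(1,a)\neq0$ does.

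Two further remarks:
\begin{itemize}
\item The paper finds $q(1,y)=8(y-2)(y-4)(y-8)n(y)$ with $n$ irreducible of degree $7$. So $q(1,y)$ is squarefree, and your root-isolation step is unnecessary for the simplicity check.
\item Your fallback of checking $P$ directly can never help, because a repeated root of $Q(1,y)$ is automatically a repeated root of $P(1,y)$.
\end{itemize}
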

\begin{proof}
A factorization of the polynomial $P(k,y)$ shows that $P$ has a unique irreducible factor $q(k,y)$ of total degree 28; moreover, $P(k,y)=q(k,y)s(k,y)$ with $s\in\Z[k,y]$ satisfying $s(1,a)\neq 0$. It follows that $q(1,a)=0$ since $P(1,a)=0$.

Specializing $q$ at $k=1$ and factoring yields
\[q(1,y)=8(y-2)(y-4)(y-8)n(y),\]
where $n(y)\in\Q[y]$ is an irreducible polynomial of degree 7. It follows that $q(1,y)$ has no repeated roots, and in particular,
\[q(1,a)=0\quad\text{and}\quad\frac{\partial q}{\partial y}(1,a)\neq0.\]

By Theorem~\ref{thm:IFT}, there exist open intervals $U$ containing $1$ and $V$ containing $a$, and a real-analytic function $f:U\to V$ such that for all $(x,y)\in U\times V$, 
\[
q(x,y)=0
\quad\Longleftrightarrow\quad
y=f(x).
\]
Since $s(1,a)\neq0$ and $s$ is continuous, after shrinking $U$ and $V$ if necessary we may assume that
$s(x,y)\neq0$ for all $(x,y)\in U\times V$. By Lemma~\ref{lem:mu_continuous}, $\lambda$ is continuous at $1$, so after possibly shrinking $U$
we have $\lambda(c)\in V$ for all $c\in U\cap[1,\infty)$. For such $c$ we obtain
\[
0=P(c,\lambda(c))=q(c,\lambda(c))\,s(c,\lambda(c)).
\]
Since $s(c,\lambda(c))\neq 0$, it follows that $q(c,\lambda(c))=0$. Because $(c,\lambda(c))\in U\times V$, this implies $\lambda(c)=f(c)$ for all $c\in U\cap[1,\infty)$. Thus, $\lambda$ agrees near $k=1$ with a real-analytic function, and is therefore real-analytic at $k=1$.
\end{proof}

\subsection*{Taylor expansion of $\mu$}

With $n(y)$ as in the proof of Proposition~\ref{prop:lambda_at_1}, we have $n(a)=0$ since $q(1,a)=0$. Hence, $a$ is a real algebraic number (indeed $a$ has degree $7$ over $\Q$).

Set $t=k-1$ and write $F(t,y)=q(1+t,y)$.  Then
\[F(0,a)=0
\quad\text{and}\quad
\frac{\partial F}{\partial y}(0,a)\neq0.\]

Note that $F(t,y)$ is a polynomial in $y$ with coefficients in the complete local ring $\R[[t]]$. Since $F_y(0,a)\neq 0$, Hensel's lemma (\cite[Thm. 7.3]{EisenbudCA}) implies that there is a unique power series $y(t)\in \R[[t]]$ such that $y(0)=a$ and $F(t,y(t))=0$ in $\R[[t]]$. We claim that $y(t)$ is the Taylor series of $\lambda(1+t)$ at $t=0$. 

By Proposition~\ref{prop:lambda_at_1}, $\lambda$ is real-analytic at $k=1$, hence the function $t\mapsto \lambda(1+t)$ is real-analytic in a neighborhood of $t=0$. Moreover, the proof of the proposition shows that there exists $\varepsilon>0$ such that $q(c,\lambda(c))=0$ for all $c\in[1,1+\varepsilon)$. It follows that
\[F\bigl(t,\lambda(1+t)\bigr)=0\quad\text{for all}\quad t\in[0,\varepsilon).\]
Let $g(t)\in\R[[t]]$ denote the Taylor series of $\lambda(1+t)$ at $t=0$. Since both $F$ and $t\mapsto \lambda(1+t)$ are real-analytic near $t=0$, the function $t\mapsto F\bigl(t,\lambda(1+t)\bigr)$ is real-analytic near 0; moreover, it vanishes on $[0,\varepsilon)$, so its Taylor series is the zero power series.

Since $F(t,y)$ is polynomial in $y$ with real-analytic coefficients in $t$, the Taylor series of $t\mapsto F(t,\lambda(1+t))$ is obtained by replacing $\lambda(1+t)$ with its Taylor series $g(t)$, and therefore coincides with $F\bigl(t,g(t)\bigr)$. It now follows from the previous paragraph that $F\bigl(t,g(t)\bigr)=0$ in $\R[[t]]$. Since $y(t)\in\R[[t]]$ is the unique power series satisfying $y(0)=a$ and $F(t,y(t))=0$, this proves that $y(t)=g(t)$, as claimed.

Newton approximation (\cite[Chap.~XII, \S7, Prop.~7.6]{LangAlgebra}) now provides a simple method for computing the series $y(t)$, i.e., the Taylor series of $\lambda(1+t)$ at $t=0$. Concretely, $y(t)$ is the limit of the sequence $(y_n(t))$ given by
\[
y_1(t)=a,\qquad
y_{n+1}(t)=y_n(t)-\frac{F(t,y_n(t))}{F_y(t,y_n(t))}.
\]

The series $y(t)$ can thus be computed to any $t$-adic precision. We may therefore construct the Taylor series for $\mu(1+t)$ at $t=0$ by using the binomial series for the square root, recalling that $\mu(r)=\sqrt{\lambda(\sqrt r)}$. Carrying out this procedure and truncating the resulting power series yields
\[\mu(1+t)=a_0+a_1t+a_2t^2+a_3t^3+a_4t^4+a_5t^5+O(t^6),\]
where the coefficients $a_i$ have the following decimal approximations:
\begin{align*}
a_0&\approx 0.3853983629832700199,\\
a_1&\approx 0.1778035533283861916,\\
a_2&\approx -0.070242718387397787,\\
a_3&\approx 0.03674792508131597225,\\
a_4&\approx -0.022272764064999471,\\
a_5&\approx 0.01479982358063203048.
\end{align*}

For readability, we may replace each coefficient above with a rational approximation to obtain the expression for $\mu(1+t)$ displayed in Section~\ref{sec:intro}.

To assess the accuracy of this expansion, we computed the series up to order $t^{40}$ and compared the resulting approximations with values obtained independently via the minimization procedure described in Section~\ref{sec:analyticity}. The results are summarized in Table~\ref{tab:comparison}. To illustrate both the accuracy near $t=0$ and the eventual loss of accuracy away from the center, we include a value of $t$ beyond the range where the truncated series provides a good approximation.

\begin{table}[ht]
\centering
\caption{Comparison of the truncated Taylor expansion of $\mu(1+t)$
with values obtained via numerical minimization.}
\label{tab:comparison}
\begin{tabular}{ccc}
\hline
$t$ & Numerical value & Taylor approximation  \\
\hline
0.1 & 0.4025109500237806 & 0.40251095002378024\\
0.2 & 0.4184118635505329 & 0.41841186355053286\\
0.5 & 0.4602836437482523 & 0.46028364374825212\\
1.0 & 0.5161758482795963 & 0.51627457893057584\\
1.2 & 0.5350575163508569 & 0.66732178296134338\\ 
\hline
\end{tabular}
\end{table}


\end{document}